\theoremstyle{definition}
\newtheorem{theorem}{Theorem}
\newtheorem{lemma}[theorem]{Lemma}
\newtheorem{proposition}[theorem]{Proposition}
\newtheorem{definition}{Definition}
\DeclareMathOperator{\R}{\mathbb{R}}
\DeclareMathOperator{\eps}{\varepsilon}
\title{A  note on the concurrent normal conjecture}
\author[A. Grebennikov]{Alexandr Grebennikov}
\author[G. Panina]{Gaiane Panina}
\address[A. Grebennikov]{Saint-Petersburg State University, Department of Mathematics and Computer Sciences}
\email[]{sagresash@yandex.ru}
\address[G. Panina]
{ St.\ Petersburg Department of Steklov Mathematical Institute}
\email[]{gaiane-panina@rambler.ru}
 \keywords{Bifurcation, Morse-Cerf theory, Morse points}
\begin{document}

\begin{abstract}
 It is conjectured since long that for any
convex body  $K \in \mathbb{R}^n$ there exists a point in
the interior of $K$ which belongs to at least $2n$ normals from different points on the
boundary of $K$. The conjecture is known to be true for $n=2,3,4$.

Motivated by a recent preprint of Y. Martinez-Maure, we give a short proof  of his result:   \textit{for dimension $n\geq 3$, under mild conditions, almost every normal
through a boundary point to a smooth convex body $K\in \mathbb{R}^n$ 
contains an intersection point  of at least $6$ normals from different points on the
boundary of $K$.} 
\end{abstract}

 \maketitle \setcounter{section}{0}

\section{Introduction}\label{sec:Overview}
Given a smooth convex body $K \in \mathbb{R}^n$, its normal to a point $p\in \partial K$ is a line passing through $p$ and orthogonal to $\partial K$ at the point $p$.
 It is conjectured  that for any
convex body  $K \in \mathbb{R}^n$ there exists a point in
the interior of $K$ which is the intersection point  of at least $2n$ normals from different points on the
boundary of $K$.
The concurrent normals conjecture  trivially holds  for $n=2$. For $n=3$ it was proven by Heil \cite{H1} and \cite{H2}  via geometrical methods and reproved by Pardon via topological methods.
The case $n=4$ was completed also by Pardon \cite{P}.

Recently Martinez-Maure proved for  $n=3,4$   that (under some mild conditions) almost every normal
through a boundary point to a smooth convex body $K$
 passes arbitrarily close to the set
of points lying on normals through at least six distinct
points of $\partial K$ \cite{M-M}.

He used  Minkowski differences of smooth convex bodies, that is, the \textit{theory of hedgehogs}.

The present paper  is very much  motivated by  \cite{M-M}. We give an alternative short proof of almost the same fact for all $n\geq 3$, see Theorem \ref{ThmMain}. Our proof is based on the bifurcation theory  and does not use hedgehogs.

\bigskip

\textbf{Acknowledgments.} G. Panina is supported by RFBR grant 20-01-00070A; A. Grebennikov is supported by Ministry of Science and Higher Education of the Russian Federation, agreement 075-15-2019-1619.

\newpage

\section{Some preliminaries and the main result}\label{prelim}

Let $n \ge 3$, and let $K$ be a strictly convex $C^{\infty}$-smooth compact body in $\R^n$. For a point $x \in \partial K$  we denote by $\mathcal{N}(x)$ the normal line to $\partial K$ at the point $x$.

Following \cite{M-M}, we make use of the support function. \footnote{Equivalently, one can work with the squared distance function to the boundary.}

 Let $h: S^{n-1} \to \R$   be the support function of $K$. For $y \in \mathbb{R}^n$ define $$h_y: S^{n-1} \to \R$$
$$
    h_y(v) = h_0(v) - \langle v, y\rangle.
$$

The function $h_y$  equals the support function of $K$ after a translation which takes the origin $O$  to the point $y$.

Given a point $y$, all the normals passing through $y$ can be read off  the function $h_y$:

\begin{lemma}\label{Lemma1}\cite{M-M}
 A   point $y$ lies on the normal  $\mathcal{N}(x)$ for some $x\in \partial K$
  iff 
   $u(x)$  is a critical point of the function $h_y$.
   
   Here $u(x)\in S^{n-1}$  is the outer unit normal to $\partial K$ at the point $x$.
   \qed
\end{lemma}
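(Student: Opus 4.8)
The plan is to read the statement off the standard differential dictionary for support functions. Let $\bar h_0$ denote the positively $1$-homogeneous extension of $h_0$ to $\R^n\setminus\{0\}$, so that $\bar h_0(v)=\sup_{z\in K}\langle z,v\rangle=\langle x(v),v\rangle$, where $x(v)\in\partial K$ is the touching point of the supporting hyperplane of $K$ with outer normal $v/|v|$. Since $K$ is strictly convex and $C^{\infty}$-smooth, the Gauss map $u\colon\partial K\to S^{n-1}$ is a diffeomorphism, $\bar h_0$ is $C^{\infty}$ on $\R^n\setminus\{0\}$, and $x(\,\cdot\,)=\nabla\bar h_0$. (For the last identity: differentiating $\bar h_0(v)=\langle x(v),v\rangle$ gives $\partial_i\bar h_0(v)=x_i(v)+\langle\partial_i x(v),v\rangle$, and $\partial_i x(v)$ is tangent to $\partial K$ at $x(v)$, hence orthogonal to its outer normal $v$; so $\nabla\bar h_0(v)=x(v)$.) I would state these facts with a reference rather than reprove them.

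Next I translate ``critical point of $h_y$ on $S^{n-1}$''. For any $C^1$ function $g$ on $S^{n-1}$ with $1$-homogeneous extension $\bar g$, the intrinsic gradient at $v\in S^{n-1}$ is the tangential component $\nabla_{S^{n-1}}g(v)=\nabla\bar g(v)-\langle\nabla\bar g(v),v\rangle v$; hence $v$ is a critical point of $g$ precisely when $\nabla\bar g(v)$ is a scalar multiple of $v$. Now $\overline{h_y}(v)=\bar h_0(v)-\langle v,y\rangle$, so $\nabla\overline{h_y}(v)=\nabla\bar h_0(v)-y=x(v)-y$. Therefore $v\in S^{n-1}$ is a critical point of $h_y$ if and only if $x(v)-y$ is parallel to $v$, i.e. if and only if $y$ lies on the line through $x(v)$ in the direction $v$.

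It remains to put $v=u(x)$. Since $u$ is a bijection, $x(u(x))=x$, and the line through $x$ in direction $u(x)$ is exactly the normal line $\mathcal{N}(x)$. Reading the equivalence of the previous paragraph in both directions: if $y\in\mathcal{N}(x)$ then $x-y=-t\,u(x)$ for some $t\in\R$, so $\nabla\overline{h_y}(u(x))$ is parallel to $u(x)$ and $u(x)$ is critical for $h_y$; conversely, if $u(x)$ is critical for $h_y$ then $x-y$ is parallel to $u(x)$, so $y\in\mathcal{N}(x)$. This proves the lemma.

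The only genuine subtlety is the regularity bookkeeping: one needs strict convexity and smoothness of $K$ to ensure $\bar h_0$ is differentiable (so that ``critical point'' is meaningful) and that $u$ is a bijection (so that the correspondence $v\leftrightarrow x$ makes sense both ways); everything else is a one-line computation. As an alternative one could run the same argument with the squared-distance function $x\mapsto|x-y|^2$ on $\partial K$, whose critical points are visibly the points $x$ with $x-y$ orthogonal to $T_x\partial K$, i.e. $y\in\mathcal{N}(x)$; but since the rest of the paper is phrased via $h_y$, I would present the support-function version above.
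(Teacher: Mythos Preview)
Your argument is correct and is the standard computation: the gradient of the $1$-homogeneous extension of the support function is the inverse Gauss map, so the Lagrange multiplier condition at $v=u(x)$ reads $x-y\parallel u(x)$, which is exactly $y\in\mathcal{N}(x)$. Nothing is missing; the regularity bookkeeping you flag (strict convexity and $C^\infty$ so that $u$ is a diffeomorphism and $\bar h_0$ is differentiable) is exactly what is needed.

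As for comparison with the paper: there is nothing to compare. The paper does not prove Lemma~\ref{Lemma1}; it simply attributes the statement to \cite{M-M} and places a \qed. So you have supplied strictly more than the paper does. Your closing remark about the squared-distance alternative matches the paper's own footnote that one could equivalently work with the squared distance function to the boundary.
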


By Morse lemma type arguments (\cite{Morse}, Lemma A), $h_y$ is a Morse function almost for all $y$.
Its bifurcation diagram \cite{A}, \cite{M-M} is given by the\textit{ focal surface} $\mathcal{F}_K$ of the body $K$.
\begin{enumerate}
  \item  The focal surface
equals the locus of the centers of principal curvatures  of $\partial K$. Thus it  has $n-1$ \textit{sheets}. Sheet  number $k$ corresponds
to  the $k$-th curvature, assuming that the curvature radii are enumerated in the ascending order:
$r_1\leq r_2 \leq...\leq r_{n-1}.$  Each sheet is an image of $S^{n-1}$. The sheets intersect each other and may have self-intersections and singularities.

\item The focal surface cuts the ambient space $\R^n$ into \textit{cameras} (that is, connected components of the complement of $\mathcal{F}_K$).
The type of the associated Morse functions $h_y$ depends on the camera containing $y$ only. 
\item Transversal crossing  of exactly one of the sheets  (say, the sheet number $k$) of the focal surface at its smooth point amounts to a birth (or death) of two critical points of $h_y$ whose indices are $k$ and $k-1$.
\end{enumerate}

\begin{theorem}\label{ThmMain}
Let $n\geq3$, and let $K\in \mathbb{R}^n$ be a $C^{\infty}$-smooth convex body, $x \in \partial K$. If the normal line $\mathcal{N}(x)$ does not intersect the singular locus of the focal surface $\mathcal{F}_K$
  then  $\mathcal{N}(x)$
contains  a point $z$ such that:

\begin{enumerate}
  \item $z$ is an intersection point  of at least $6$ normals from different points on the
boundary of $K$.
  \item The distance $|xz| $  satisfies $$r_1(x)<|xz|<r_{n-1}(x),$$  where $r_1(x)$ and $r_{n-1}(x)$ are  the largest and the smallest principal curvature radii  at the point $x$.
\end{enumerate}
\end{theorem}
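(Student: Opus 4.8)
The plan is to trace the critical points of the family $h_y$ as $y$ runs along the normal line $\mathcal N(x)$, and to count how many critical points are "alive" at a suitable interior point $z$. Parametrize $\mathcal N(x)$ by signed arclength $t$, with $y(t) = x + t\,u(x)$, so $y(0)=x$ and increasing $t$ moves into the interior of $K$. By Lemma~\ref{Lemma1}, the normals through $y(t)$ correspond exactly to the critical points of $h_{y(t)}$ on $S^{n-1}$, and the number of distinct normals through $y(t)$ equals the number of critical points of $h_{y(t)}$. Since $h$ is the support function of a strictly convex smooth body, for every $t$ the point $u(x)$ itself is a critical point of $h_{y(t)}$ (the normal $\mathcal N(x)$ passes through every $y(t)$); its Morse index changes precisely when $t$ passes through one of the curvature radii $r_1(x)\le\cdots\le r_{n-1}(x)$, because the Hessian of $h_{y(t)}$ at $u(x)$ is (up to the standard identification) $\operatorname{diag}(r_i(x)-t)$ along principal directions. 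So at $t$ slightly less than $r_1(x)$ the critical point $u(x)$ has index $0$ (it is the minimum, $y$ is near the boundary), and at $t$ slightly greater than $r_{n-1}(x)$ it has index $n-1$.

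The heart of the argument is a Morse–Cerf bifurcation count. The hypothesis that $\mathcal N(x)$ avoids the singular locus of $\mathcal F_K$ means that along the segment $t\in(r_1(x),r_{n-1}(x))$ every crossing of the focal surface is a transversal crossing of a single smooth sheet, hence — by item (3) of the preliminaries — a nondegenerate birth/death of a pair of critical points of adjacent indices $k,k-1$. In particular $h_{y(t)}$ is Morse for all but finitely many $t$ in this interval, and the total count of critical points changes only by $\pm2$ at those finitely many times. Now I would argue that the index of the distinguished critical point $u(x)$ must climb from $0$ to $n-1$, and each unit increase of its index forces, by the local bifurcation picture, the appearance of an \emph{additional} critical point nearby (when the sheet $k$ is crossed, the pair born has indices $k$ and $k-1$; matching the index jump of $u(x)$ at $t=r_k(x)$ consumes one of these, leaving the other as a genuinely new critical point that persists on one side). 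Counting: starting from the single critical point $u(x)$ of index $0$ just below $r_1(x)$, each of the $n-1$ curvature-radius crossings contributes at least one extra surviving critical point, and one checks the parities force at least $6$ critical points to coexist at some intermediate value $t=t_0\in(r_1(x),r_{n-1}(x))$ — already for $n=3$ the two crossings at $r_1(x)$ and $r_2(x)$, together with the forced minimum and maximum that must exist at every $t$ for a support-type function on $S^{n-1}$, give the six. Set $z=y(t_0)$.

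More carefully, for the lower bound of $6$ I would invoke the following: for every $t$ the function $h_{y(t)}$ on the sphere $S^{n-1}$ attains a minimum and a maximum, so it has at least one critical point of index $0$ and at least one of index $n-1$; when $t\in(r_1(x),r_{n-1}(x))$ the distinguished point $u(x)$ has index strictly between $0$ and $n-1$ at generic such $t$, giving a third point; and tracking the birth events on both the "ascending" side (as $t$ grows from just above $r_1(x)$) and the "descending" side (as $t$ decreases from just below $r_{n-1}(x)$), each regime forces an extra pair, yielding $\ge 6$ simultaneously at an overlap value. The clean way to organize this is to run Cerf's theory on the two half-intervals and use that a birth on one side and a death on the other cannot cancel without an intervening Morse function carrying the full count. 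I expect the main obstacle to be exactly this bookkeeping: ensuring that the critical points produced by the successive bifurcations are genuinely \emph{distinct} points of $S^{n-1}$ (equivalently, distinct normals) and that they all \emph{persist} at one common parameter value $t_0$, rather than being born and dying in disjoint subintervals. Handling that requires either a connectedness/persistence argument for each branch of the Cerf diagram or a direct Morse-inequality estimate on a level set of the function $(v,t)\mapsto h_{y(t)}(v)$, and getting the constant up to $6$ (not just $4$) is where the hypothesis $n\ge 3$ and the avoidance of the singular locus are both essential.
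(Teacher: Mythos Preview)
Your overall strategy---parametrize the normal, track the Morse index of the distinguished critical point $u(x)$, and count---is the same as the paper's. But there is a genuine gap, and it lies exactly where you flag the ``main obstacle.''

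You assert that each crossing of the focal surface along $\mathcal N(x)$ is a \emph{transversal} crossing and hence a birth/death. For the crossings at $t=r_k(x)$ this is false: the point $x-r_k(x)u$ is the $k$-th center of curvature of $\partial K$ at $x$, and the normal line is \emph{tangent} to sheet $k$ of the focal surface there (the focal surface is the envelope of the normals). The bifurcation at $t=r_k(x)$ is therefore not a birth/death but what the paper calls an \emph{index exchange}: two critical points of indices $k{-}1$ and $k$ collide and re-emerge with the same pair of indices. In particular $N_t$ does not jump at $t=r_k(x)$, so your picture of ``each curvature-radius crossing contributes at least one extra surviving critical point'' has no mechanism behind it, and the later talk of ``extra pairs from birth events on the ascending and descending sides'' cannot be made rigorous---births and deaths at the genuinely transversal crossings can cancel.

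The paper uses the index-exchange picture in the opposite way. At $t=r_1+\varepsilon$ the point $u$ has index~$1$, and the exchange partner is a nearby local minimum $u_\varepsilon$ with $u_\varepsilon\to u$ as $\varepsilon\to 0$; since its value is close to that of the saddle $u$, $u_\varepsilon$ is not the global minimum, so there are \emph{two} critical points of index~$0$ at $t_1:=r_1+\varepsilon$. Symmetrically there are two of index~$n{-}1$ at $t_2:=r_{n-1}-\varepsilon$. Now a short parity lemma replaces all your Cerf bookkeeping: on $[t_1,t_2]$ one always has at least a min, a max, and $u$ of intermediate index, so $N_t\ge 3$, hence $N_t\ge 4$ by evenness; if $N_t\equiv 4$ there are no birth/deaths, forcing the endpoint configurations (two minima at $t_1$, two maxima at $t_2$) to coexist with no intermediate-index point---contradicting the presence of $u$. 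Hence some $N_t\ge 6$. The missing idea in your proposal is precisely this: recognize the $r_k$-crossings as index exchanges, extract from the exchange at $r_1$ (resp.\ $r_{n-1}$) a \emph{second} local minimum (resp.\ maximum), and then run the parity argument instead of trying to track individual Cerf branches.
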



\section{Proof  of Theorem \ref{ThmMain}}\label{prelim}

\begin{definition}
    A one-parametric family  $f_t \in C^{\infty}(S^{n-1}, \R), \ \ t \in \R_{+}$ is \textit{nice} if the following properties hold:
    \begin{enumerate}
        \item $f_t$ depends  smoothly on $t$;
        \item $f_t$ is a Morse function for each $t$ except for finitely many \textit{bifurcation points} $t_1, \ldots t_m$;
        \item  each of the bifurcation points is of one of the two types:
          \begin{enumerate}
                                                            \item A\textit{ birth/death point}. Some two critical points with some neighbor indices $k$ and $k-1$ collide and disappear, or, vice versa,  two critical points with neighbor indices appear.
                                                            \item An \textit{index exchange point}. Two critical points with neighbor indices $k$ and $k-1$ collid
                                                           at $t=t_i$. There arises a degenerate critical point  which splits afterwards into two critical points with the same  indices $k$ and $k-1$.
                                                          \end{enumerate}

    \end{enumerate}
    \end{definition}

    For a nice family, denote $T = \R_{+} \setminus \{ t_1, \ldots, t_m\}$. For each $t \in T$ and $0 \le k \le n-1$ let $C_{t, k}$  be the number of critical points of $f_t$ of index $k$, and let  $N_t$  be the total number of critical points of the function $f_t$.

\begin{lemma}
    \label{discrete-continuity}
    Let $f_t$ be a {nice} family of functions. Let also $C_{t_1, 0} \ge 2$, $C_{t_2, n-1} \ge 2$ and
    $$
        \sum_{k = 1}^{n-2} C_{t, k} \ge 1 \text{ for all } t \in T \cap [t_1, t_2].
    $$
    Then there exists $t \in T \cap [t_1, t_2]$ such that $N_t \ge 6$.
\end{lemma}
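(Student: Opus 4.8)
The plan is to argue by contradiction, exploiting the parity of the number of critical points. First recall that any Morse function on $S^{n-1}$ satisfies $\sum_{k}(-1)^{k}C_{t,k}=\chi(S^{n-1})\in\{0,2\}$; in particular $N_t=\sum_k C_{t,k}$ is \emph{even} for every $t\in T$. Hence it suffices to rule out the possibility that $N_t\le 4$ for every $t\in T\cap[t_1,t_2]$.

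Introduce the shorthand $m(t)=C_{t,0}$ for the number of local minima, $M(t)=C_{t,n-1}$ for the local maxima, and $\mu(t)=\sum_{k=1}^{n-2}C_{t,k}$ for the "middle" critical points, so that $N_t=m(t)+M(t)+\mu(t)$ and, on $T\cap[t_1,t_2]$, one always has $m(t)\ge 1$, $M(t)\ge 1$ (a Morse function on a compact manifold has a minimum and a maximum) and $\mu(t)\ge 1$ (the hypothesis). Assuming $N_t\le 4$ throughout, from $N_{t_2}\le 4$, $M(t_2)\ge 2$ and $\mu(t_2)\ge 1$ we get $m(t_2)\le 1$, while $m(t_1)\ge 2$. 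Now track $m(t)$ along $[t_1,t_2]$: it is constant between consecutive bifurcation points, an index-exchange point leaves the whole vector $(C_{t,k})_k$ unchanged, and a birth/death point with neighbor indices $k$ and $k-1$ changes $m$ by $\pm 1$ exactly when $k=1$ and fixes it otherwise. Since $m(t_1)\ge 2>1\ge m(t_2)$ and there are only finitely many bifurcation points, there is a first one, $\tau\in(t_1,t_2)$, at which $m$ attains the value $1$; just before $\tau$ we have $m=2$, just after $m=1$, and $\tau$ is a death point with neighbor indices $1$ and $0$. Just before $\tau$ the index-$1$ critical point about to be annihilated forces $\mu\ge 1$, and together with $m=2$, $M\ge 1$ and $N\le 4$ this forces $\mu=1$ and $M=1$; thus the unique middle critical point has index $1$ and is precisely the one killed at $\tau$. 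Consequently, immediately after $\tau$ one has $\mu=0$ at a parameter value in $T\cap[t_1,t_2]$, contradicting $\sum_{k=1}^{n-2}C_{t,k}\ge 1$. This contradiction proves the lemma.

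The only delicate point is the bookkeeping in the last step: one must be sure that crossing a bifurcation point alters the multiset of Morse indices in exactly the controlled way prescribed by the definition of a nice family, and that the "first time $m$ hits $1$" is realized at a genuine bifurcation point with $m=2$ on its left. Both follow from finiteness of the bifurcation set and from the fact that $m$ can change only by $\pm 1$ at a time, so I do not expect a real obstacle beyond this careful case analysis. It is worth noting that the asymmetry that makes the argument work is entirely contained in the hypothesis $\mu(t)\ge 1$: it is what prevents the cheap transition from $(m,\mu,M)=(2,1,1)$ to $(1,0,1)$.
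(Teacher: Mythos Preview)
Your proof is correct, but it takes a more elaborate route than the paper's. Both arguments start by assuming $N_t\le 4$ and use parity to force $N_t=4$ throughout (you leave this implicit, but it is there: $m,M,\mu\ge 1$ and $N$ even give $N=4$). From this point the paper simply observes that $N_t$ constant means there are no birth/death bifurcations on $[t_1,t_2]$; since index-exchange bifurcations do not alter any $C_{t,k}$, every $C_{t,k}$ is constant on the segment. Then $C_{t,0}\ge 2$ and $C_{t,n-1}\ge 2$ hold for all $t$, which together with $N_t=4$ forces $\mu(t)=0$, contradicting the hypothesis immediately.

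Your argument instead tracks $m(t)$ and locates a death bifurcation $\tau$ of indices $0,1$ at which $m$ drops from $2$ to $1$, then shows that the unique middle critical point disappears there, forcing $\mu=0$ just after $\tau$. This is a valid chain of deductions, but notice that the very existence of such a $\tau$ already contradicts $N_t\equiv 4$: a death bifurcation drops $N_t$ by $2$. So the detailed analysis of what happens at $\tau$ is unnecessary; the contradiction arrives one step earlier. In effect you rediscover the paper's argument in a roundabout way. The paper's version is shorter and makes the mechanism (constancy of all $C_{t,k}$) transparent, while your version has the minor virtue of making explicit which hypothesis ($\mu\ge 1$) is violated and where.
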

\begin{proof} Assume the contrary, that is, for every $t$ we have $N_t < 6$.
    By assumption of the lemma, there are other critical points of $f_t$ than max and min, so $N_t$ is at least $3$  for all $t$. Besides, $N_t$ is even, so $N_t$  necessarily equals $4$ for all  $t \in T \cap[t_1,t_2]$.

    Therefore there are no birth/death bifurcations on $[t_1, t_2]$.  Together with the conditions on $C_{t_1, 0}$ and $C_{t_2, n-1}$, this implies that $f_t:S^n \rightarrow \mathbb{R}$ has two maxima and two minima for any $t\in T\cap [t_1, t_2]$ and no other critical points. A contradiction.
\end{proof}

Now we are ready to prove Theorem \ref{ThmMain}. Assume that the point $x$ is such that the normal $\mathcal{N}(x)$ does not meet the singularity locus of the focal surface. Denote   by $u=u(x)$ the outer unit normal to $\partial K$ at the point $x$.
  Then the    principal curvature radii $r_1 < r_2 < \ldots < r_{n-1}$ of $\partial K$ at the point $x$    are all different, and the family of functions $\{h_{x - tu}\}_{t \in \R_{+}}$ is {nice}. The set $T$ of bifurcation points  includes $\{r_1,...r_{n-1}\}$.  Let us prove that there exists $r \in (r_1, r_{n-1})$, such that $x - ru$ lies on $\ge 6$ normals.

 By Lemma \ref{Lemma1}, $u$ is a critical point of $h_{x - tu}$ for all $t$. It is easy to check that its Morse index equals $0$ when $t \in (0, r_1)$, equals $1$ when $t \in (r_1, r_2)$, \ldots, equals $n-1$ when $t \in (r_{n-1}, +\infty)$ \cite{M-M}.

    We shall apply Lemma \ref{discrete-continuity} for the family $\{ h_{x - tu} \}$ on the segment $[t_1 ,t_2] := [r_1 + \eps, r_{n-1} - \eps]$ for sufficiently small $\eps > 0$. Since the Morse index of the point $u$ is neither $0$, nor $n-1$  on this segment, the condition $\sum_{k = 1}^{n-2} C_{t, k} \ge 1$ is satisfied. So, we only need to check that $C_{t_1, 0} \ge 2, C_{t_2, n-1} \ge 2$.

    We will check the first inequality, the second may be verified in a similar way.   The point $u$ at $t=r_1$ is an index exchange point. This means that  as $t$ equals $r_1+\varepsilon$  and tends to $r_1$, there is a (local) minimum point $u_{\varepsilon}$ of $h_{x - (r_1+\varepsilon)u}$ which tends to the critical point $u$ of index $1$. We conclude that for small $\varepsilon$, the point $u_{\varepsilon}$ is not the global minimum of $h_{x - (r_1+\varepsilon)u}$. Therefore there are two local minima, that is, two distinct critical points of index $0$.

    Application of Lemma \ref{discrete-continuity} completes the proof.\qed

\end{document}